\documentclass[12pt]{article}

\usepackage{amsmath,amsthm,amsfonts,amssymb}
\usepackage{graphicx}

\allowdisplaybreaks[4]

\newtheorem {lemma}{Lemma}[section]
\newtheorem {theorem} {Theorem}[section]

\newtheorem {corollary}{Corollary}[section]

\newtheorem {question}{Question}[section]
\usepackage[figurename=Fig.]{caption}

\usepackage{fullpage}

\begin{document}

\title{Answers to Gould's question concerning the existence of chorded cycles}

\author{Leyou Xu\footnote{Email: leyouxu@m.scnu.edu.cn}, Bo Zhou\footnote{Corresponding author;
Email: zhoubo@m.scnu.edu.cn}\\
School of Mathematical Sciences, South China Normal University\\
Guangzhou 510631, P.R. China}

\date{}
\maketitle

\begin{abstract}
Answers are offered to Gould's question to find spectral sufficient conditions
that imply a graph contains a chorded cycle via signless Laplacian spectral radius. The conditions are tight. \\ \\
{\it Keywords:} Chord, Cycle, Chorded cycle, Signless Laplacian spectral radius\\ \\
{\it Mathematics Subject Classification:} 05C38, 05C50, 15A18
\end{abstract}

\section{Introduction}

All graphs considered in this paper are simple and finite.
Let $C$ be a cycle in a graph. A chord of  $C$  is an edge not in $C$  between two vertices of $C$.
If $C$ has at least one chord, then it is called a chorded cycle. In 1961
 P\'{o}sa \cite{Pos} asked a question: What conditions imply a graph contains a chorded cycle?
P\'{o}sa offered an answer by proving that any graph of order $n\ge 4$ with at least $2n-3$ edges contains a chorded cycle. In 1963
Czipser (see problem 10.2, p.~65 in \cite{Lov}) gave another simple answer by proving that  any graph with minimum degree at least
three contains a chorded cycle. In recent years,
the study of cycles with certain properties such as chorded cycles in graphs received much attention.
There are various sufficient conditions for the existence of a
chorded cycle, or sets of chorded cycles, or cycles with multiple chords
exist, or chorded cycles with additional properties, see
the survey \cite{Gou} and references therein. %Ronald J.
Gould   asked in \cite{Gou} the following question:

\begin{question} \label{qu1}
What spectral conditions imply a graph contains a chorded cycle?
\end{question}

Given a graph $G$, denote by $\rho(G)$ the spectral radius of $G$ and $q(G)$ the signless Laplacian spectral radius of $G$. Let $K_{a,b}$ be the complete bipartite graph with partite sizes $a$ and $b$.

Zheng et al. \cite{ZHW} answered Gould's question by showing that any graph $G$ of order $n\ge 6$ with $\rho(G)\ge \rho(K_{2, n-2})$ contains a chorded cycle unless $G\cong K_{2,n-2}$.
We give different answers via the signless Laplacian spectral radius.

For positive integers $n$ and $c$ with $n\ge 3$ and $0\le c\le \lfloor \frac{n-1}{2}\rfloor$, let
$F_{n,c}$ be the graph obtained from $K_{1, n-1}$ by adding $c$ edges that are
pairwise nonadjacent. In particular, $F_{n,0}=K_{1, n-1}$.
Let $F_n=F_{n, \lfloor \frac{n-1}{2}\rfloor}$.  For odd $n$, $F_n$ is known as the friendship graph (or windmill graph) due to the Friendship Theorem \cite{ERS}:
a graph in which any two distinct vertices have exactly one common neighbor  has a vertex joined to all others. The main results are as follows.

\begin{theorem} \label{You}
If $G$ is a graph of order $n\ge 4$ with $q(G)\ge n$, then $G$ contains a chorded cycle unless $G\cong K_{2,n-2}$ or $G\cong F_{n,c}$ for some $c=0,\dots, \left\lfloor \frac{n-1}{2}\right\rfloor$.
\end{theorem}

\begin{theorem} \label{You2}
If $G$ is a graph of order $n\ge 4$ with
\[
q(G)\ge \begin{cases}
\frac{1}{2}(n+2+\sqrt{(n-2)^2+8}) & \mbox{if $n$ is odd}, \\[2mm]
\frac{1}{2}(n+1+\sqrt{(n-1)^2+8}) & \mbox{if $n$ is even},
\end{cases}
\]
then $G$ contains a chorded cycle unless $G\cong F_n$.
\end{theorem}

Observe that $K_{r,n-r}$ with $3\le r\le \lfloor \frac{n}{2}\rfloor$ satisfies the condition in Theorem \ref{You} but does not satisfies the condition in Theorem \ref{You2},  as $q(K_{r,n-r})=n$.  Thus,
Theorem \ref{You} is better than Theorem \ref{You2}, but
both theorems give  answers to Question \ref{qu1}. Actually, we will derive Theorem \ref{You2} from Theorem \ref{You}. Evidently, the conditions in both theorems are sharp.

Theorem \ref{You2} may be restated as: If $G$ is a graph of order $n\ge 4$ with no chorded cycles, then
\[
q(G)\le \begin{cases}
\frac{1}{2}(n+2+\sqrt{(n-2)^2+8}) & \mbox{if $n$ is odd}, \\[2mm]
\frac{1}{2}(n+1+\sqrt{(n-1)^2+8}) & \mbox{if $n$ is even},
\end{cases}
\]
with equality if and only if $G\cong F_n$. In this sense, it belongs to the problem of determining the maximum spectral radius or signless Laplacian spectral radius of $\mathcal{F}$-free graphs, where $\mathcal{F}$ is a class of given graphs. Here $\mathcal{F}$ is the class of chorded cycles with no restrictions on order and size. Related work, for example, when $\mathcal{F}$ consists of a single short cycle or several short cycles,  received much attention, see, e.g. \cite{CWZ,TCC,ZWF}.

If a graph $G$ contains no chorded cycles, then $G$ is $K_{3,3}$-free. However, if $G$ is $K_{3,3}$-free, it can contain a chorded cycle. For example, $K_{2,n-2}$ for $n\ge 4$  with an edge added and the join of $P_1$ and $P_{n-1}$ with $n\ge 4$ are such graphs. Similar cases occurs
if  a graph does not contain a cycle of length $4$,  a cycle of length $6$, or a theta-graph consisting of three  internally disjoint paths of length two with
the same two distinct end vertices.

\section{Preliminaries}

Let $G$ be a graph with vertex set $V(G)$ and edge set $E(G)$.
For $v\in V(G)$, we denote by $N_G(v)$ the neighborhood of $v$ in $G$, and  $d_G(v)$  the degree of $v$ in $G$.

For an $n$-vertex graph $G$, the adjacency matrix of $G$ is the $n\times n$ matrix
$A(G)=(a_{uv})_{u,v\in V(G)}$, where $a_{uv}=1$ if $uv\in E(G)$ and $a_{uv}=0$ otherwise,
and the signless Laplacian matrix of  $G$  is the $n\times n$ matrix
$Q(G)=(q_{uv})_{u,v\in V(G)}$, where
\[
q_{uv}=\begin{cases}
d_G(u) & \mbox{if $u=v$,}\\
1     & \mbox{if $u\ne v$ and $uv\in E(G)$,}\\
0     & \mbox{if $u\ne v$ and $uv\not\in E(G)$}.
\end{cases}
\]
That is, $Q(G)$ is equal to the sum of the  diagonal
degree matrix and the adjacency matrix of $G$.
The spectral radius of $G$ is the largest eigenvalue of $A(G)$, and
the signless Laplacian spectral radius of $G$ is the largest eigenvalue of $Q(G)$.

For a graph $G$ with $\emptyset\ne S\subseteq V(G)$, denote by $G[S]$ the subgraph of $G$ induced by $S$.
For a graph $G$ with two nonadjacent vertices $u$ and $v$, denote by $G+uv$ the graph obtained from $G$ by adding the edge $uv$.
%If $wz$ is an edge of $G$, then $G-wz$ is the graph obtained from $G$ be removing the edge $wz$.

The following lemma is an immediate consequence of the Perron-Frobenius Theorem.

\begin{lemma}\label{addedges}
Let $G$ be a  graph and $u$ and $v$  two nonadjacent vertices of $G$. If $G+uv$ is connected,  then $q(G+uv)> q(G)$.
\end{lemma}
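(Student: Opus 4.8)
The plan is to realize $Q(G+uv)$ as an entrywise (and positive semidefinite) perturbation of $Q(G)$ and then appeal to the monotonicity of the Perron root under such perturbations. First I would record how adding the edge $uv$ changes the signless Laplacian: it raises $\delta_G(u)$ and $\delta_G(v)$ each by one and changes the $(u,v)$ and $(v,u)$ entries from $0$ to $1$, so that
\[
Q(G+uv)=Q(G)+(e_u+e_v)(e_u+e_v)^{\top},
\]
where $e_u,e_v$ denote the standard basis vectors indexed by $u$ and $v$. In particular $Q(G+uv)\ge Q(G)\ge 0$ entrywise, and these two matrices are distinct because they disagree in the four entries indexed by $\{u,v\}$.

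The key structural observation is that, since $G+uv$ is connected, the nonnegative symmetric matrix $Q(G+uv)$ is irreducible. I would then invoke the standard Perron--Frobenius comparison principle: if $0\le A\le B$ entrywise, $B$ is irreducible and $A\ne B$, then $\rho(A)<\rho(B)$. Applying this with $A=Q(G)$ and $B=Q(G+uv)$ yields $q(G)=\rho(Q(G))<\rho(Q(G+uv))=q(G+uv)$, which is the assertion.

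An equivalent variational route, which makes the role of the hypothesis most transparent, is to pick a unit Perron vector $x\ge 0$ of $Q(G)$ and use the Rayleigh characterization $q(G+uv)\ge x^{\top}Q(G+uv)x=x^{\top}Q(G)x+(x_u+x_v)^2=q(G)+(x_u+x_v)^2$. The one point that genuinely needs the hypothesis---and the only place where any care is required---is verifying that $x_u+x_v>0$, so that the last inequality is strict. When $G$ is connected this is immediate, since $x$ is then strictly positive. The mild obstacle is the disconnected case: because adding a single edge lowers the number of components by at most one, connectivity of $G+uv$ forces $G$ to have at most two components, with $u$ and $v$ lying in different ones when there are two; choosing $x$ to be supported on a component of maximum signless Laplacian spectral radius (which then contains $u$ or $v$) guarantees $x_u+x_v>0$ and completes the argument.
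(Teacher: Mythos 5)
Your proof is correct and follows essentially the route the paper itself takes: the paper states this lemma without proof as ``an immediate consequence of the Perron--Frobenius Theorem,'' and your first argument (entrywise domination $0\le Q(G)\le Q(G+uv)$ with $Q(G+uv)$ irreducible and the two matrices distinct, hence strict inequality of Perron roots) is precisely that consequence spelled out. Your Rayleigh-quotient variant, with the careful treatment of the case where $G$ itself is disconnected, is a nice bonus that fills in a detail the paper's one-line citation glosses over.
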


We need two upper bounds for the signless Laplacian spectral radius.
A graph  is said to be semi-regular if it is bipartite  and  vertices in the same partite set have the same degree.

\begin{lemma}\label{qmu}\cite{FG} For a nontrivial connected graph $G$,
$q(G)\le \max\{d_G(u)+\frac{1}{d_G(u)}\sum_{v\in N_G(u)} d_G(v):u\in V(G) \}$ with equality  if and only if $G$ is regular or  semi-regular.
\end{lemma}

\begin{lemma}\label{qedge}\cite{CRS} For a connected graph $G$,
$q(G)\le \max\{d_G(u)+d_G(v): uv\in E(G)\}$ with  equality if and only if $G$ is regular or  semi-regular.
\end{lemma}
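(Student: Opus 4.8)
The plan is to relate $Q(G)$ to the adjacency matrix of the line graph $L(G)$ through the vertex--edge incidence matrix, which reduces the edge bound to the elementary fact that the spectral radius of a graph is at most its maximum degree. Assume first that $G$ has at least one edge (otherwise the maximum over $E(G)$ is vacuous and $q(G)=0$), and let $B$ be the $V(G)\times E(G)$ incidence matrix of $G$. A direct computation gives $Q(G)=BB^{\top}$, while $B^{\top}B=A(L(G))+2I$, since two distinct edges share exactly one vertex precisely when they are adjacent in $L(G)$. Because $BB^{\top}$ and $B^{\top}B$ are positive semidefinite with the same nonzero eigenvalues, I would deduce $q(G)=\lambda_{\max}(A(L(G)))+2$.

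Next I invoke the standard inequality $\lambda_{\max}(A(H))\le \Delta(H)$ for an arbitrary graph $H$: taking a nonnegative eigenvector $z$ of $A(L(G))$ for $\lambda_{\max}$ and a coordinate $e$ at which $z_e$ is largest, the eigenvalue equation at $e$ together with $z_f\le z_e$ for every neighbour $f$ yields $\lambda_{\max}(A(L(G)))\le \Delta(L(G))$. Since the degree of the vertex $e=uv$ in $L(G)$ equals $\delta_G(u)+\delta_G(v)-2$, we have $\Delta(L(G))=\max\{\delta_G(u)+\delta_G(v):uv\in E(G)\}-2$, and combining the three relations gives exactly $q(G)\le \max\{\delta_G(u)+\delta_G(v):uv\in E(G)\}$.

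For the equality statement I would assume $G$ connected, so that $L(G)$ is connected too. Tracking the chain above, equality forces $\lambda_{\max}(A(L(G)))=\Delta(L(G))$, and for a connected graph this occurs precisely when it is regular; hence equality in the lemma is equivalent to $L(G)$ being regular, that is, to $\delta_G(u)+\delta_G(v)$ taking a constant value $k$ over all edges $uv$. To identify such $G$, the key observation is that any two neighbours of a common vertex then have equal degree, so the degrees alternate between two values $a$ and $b=k-a$ along walks. If $a=b$, connectivity makes $G$ $a$-regular; if $a\ne b$, then every edge joins a degree-$a$ vertex to a degree-$b$ vertex, the two degree classes are independent sets, and $G$ is semi-regular bipartite. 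Conversely, both families obviously have constant edge-degree sum, which closes the characterization.

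I expect the main obstacle to be the equality analysis rather than the inequality. Two points need care: first, verifying that equality in $\lambda_{\max}(A(L(G)))\le\Delta(L(G))$ genuinely forces $L(G)$ to be regular (via the equality conditions in the Perron argument and connectivity of $L(G)$), and second, arguing cleanly that a connected graph with constant edge-degree sum lands in exactly one of the two listed families, correctly separating the degenerate case $a=b$ that yields regularity from the case $a\ne b$ where the degree classes must be checked to form a legitimate bipartition.
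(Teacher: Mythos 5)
Your proof is correct. Note that the paper itself gives no proof of this lemma: it is quoted from the survey \cite{CRS}, so there is no in-paper argument to compare against. Your route---factoring $Q(G)=BB^{\top}$ through the incidence matrix, using that $B^{\top}B=A(L(G))+2I$ shares its nonzero spectrum with $BB^{\top}$ to get $q(G)=\lambda_{\max}(A(L(G)))+2$, and then invoking $\lambda_{\max}(A(H))\le\Delta(H)$ with its equality case---is in fact the classical derivation of this bound in the signless Laplacian literature, essentially the one underlying \cite{CRS}, so you have independently reconstructed the standard proof rather than found a new one. The details you flagged as delicate are handled correctly: you dispose of the edgeless case up front (needed since $\lambda_{\max}(B^{\top}B)\ge 2>0$ is what lets you pass the maximum eigenvalue between $BB^{\top}$ and $B^{\top}B$); $L(G)$ is indeed connected when $G$ is connected with an edge, so the Perron argument forcing $\lambda_{\max}(A(L(G)))=\Delta(L(G))$ only for regular $L(G)$ applies; and your translation of ``$L(G)$ regular'' into ``$\delta_G(u)+\delta_G(v)$ constant on edges,'' followed by the observation that neighbours of a common vertex have equal degree, correctly splits into the $a=b$ case (regular) and the $a\ne b$ case, where an edge inside a degree class would force $2a=a+b$, so the classes form a genuine bipartition with constant degrees on each side, i.e.\ $G$ is semi-regular bipartite in exactly the sense defined before Lemma~\ref{qmu}. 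The converse direction also follows from your chain of equalities since both families have regular line graphs, so the biconditional is complete.
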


Recall that the Laplacian matrix $L(G)$ of a graph $G$ is defined as  the difference between the  diagonal
degree matrix and the adjacency matrix of $G$.

\begin{lemma} \cite{Me} \label{bb1}  For a graph $G$, $Q(G)$ and $L(G)$ have the same spectrum if and only if $G$ is a bipartite graph.
\end{lemma}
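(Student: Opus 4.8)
The plan is to prove the two implications separately: the forward direction via a diagonal signature (gauge) transformation, and the converse via a comparison of the multiplicities of the eigenvalue $0$, the latter being where the argument needs the most care.

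For the forward direction, suppose $G$ is bipartite. I would first reduce to the connected case, since $Q(G)$ and $L(G)$ are block diagonal over the components of $G$, and $G$ is bipartite if and only if each component is. For a connected bipartite $G$ with parts $X$ and $Y$, introduce the diagonal signature matrix $S=\mathrm{diag}(s_v)$ with $s_v=1$ for $v\in X$ and $s_v=-1$ for $v\in Y$. Since $S^{2}=I$, the matrix $S$ is orthogonal with $S^{-1}=S$, and a direct computation gives $S^{-1}Q(G)S=D+SA(G)S$, where $D$ is the diagonal degree matrix (which commutes with $S$). Because every edge of $G$ joins $X$ to $Y$, the $(u,v)$ entry of $SA(G)S$ equals $s_us_v a_{uv}=-a_{uv}$, so $SA(G)S=-A(G)$ and hence $S^{-1}Q(G)S=D-A(G)=L(G)$. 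Thus $Q(G)$ and $L(G)$ are similar and share the same spectrum.

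For the converse I would argue by a dimension count of the kernels. The key identity is $x^{\top}Q(G)x=\sum_{uv\in E(G)}(x_u+x_v)^{2}$, which shows that $Q(G)$ is positive semidefinite; moreover $Q(G)x=0$ forces $x_u=-x_v$ along every edge. On a connected component this homogeneous system has a nonzero solution exactly when the component admits a consistent $\pm1$ two-colouring, i.e. exactly when it is bipartite, in which case the solution space is one dimensional. Summing over components, the multiplicity of the eigenvalue $0$ of $Q(G)$ equals the number of bipartite components of $G$. On the other hand, $L(G)$ is positive semidefinite with the all-ones vector in its kernel on each component, so the multiplicity of $0$ as an eigenvalue of $L(G)$ equals the total number of components. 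If $Q(G)$ and $L(G)$ have the same spectrum, these two multiplicities coincide, forcing every component of $G$ to be bipartite; hence $G$ is bipartite.

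The main obstacle is precisely the converse: one must rule out that a non-bipartite $G$ could still satisfy $\mathrm{spec}(Q(G))=\mathrm{spec}(L(G))$ through some accidental cancellation among components. Comparing a single crude invariant does not suffice, since $Q(G)$ and $L(G)$ always agree in their first two spectral moments (the cross term $\mathrm{tr}(DA(G))$ vanishes), and an odd, triangle-free graph such as $C_5$ is not separated by the third moment either, as $\mathrm{tr}(Q(G)^{3})-\mathrm{tr}(L(G)^{3})=2\,\mathrm{tr}(A(G)^{3})$ counts only triangles. Anchoring the argument on the multiplicity of the eigenvalue $0$ circumvents this, because that multiplicity carries a clean combinatorial meaning for both matrices, and the positive semidefiniteness of $Q(G)$ guarantees that $0$ is genuinely its least eigenvalue, so equal spectra really do force equal nullities.
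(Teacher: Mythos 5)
Your proof is correct. The paper does not actually prove this lemma---it is quoted from Merris's survey \cite{Me} with no argument given---so there is no in-paper proof to compare against; your two-step argument is precisely the standard one for this classical fact: the signature similarity $S^{-1}Q(G)S=L(G)$ (with $s_v=\pm 1$ according to the part containing $v$) handles the bipartite direction, and for the converse the comparison of nullities---the multiplicity of the eigenvalue $0$ of $L(G)$ equals the number of components, while that of $Q(G)$, via the identity $x^{\top}Q(G)x=\sum_{uv\in E(G)}(x_u+x_v)^2$, equals the number of bipartite components---forces every component to be bipartite, and both steps, including the reduction to components, are carried out correctly.
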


For a graph $G$, let $\mu(G)$ be the largest Laplacian eigenvalue of $G$, i.e, the largest eigenvalue of $L(G)$. Denote by $\overline{G}$ the complement of $G$.

\begin{lemma} \cite{GZ,Me} \label{bb2}
If $G$ is a graph of order $n\ge 2$, then $\mu(G)\le n$  with equality if and only if $\overline{G}$ is disconnected.
\end{lemma}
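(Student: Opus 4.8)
The plan is to compare $G$ with its complement through the classical identity relating their Laplacians, and then read off both the inequality and its equality case directly from the spectrum. First I would record that $L(G)\mathbf{1}=\mathbf{0}$, since every row of $L(G)$ sums to zero; hence the all-ones vector $\mathbf{1}$ is an eigenvector of $L(G)$ for the eigenvalue $0$, and the hyperplane $\mathbf{1}^{\perp}=\{x:\sum_i x_i=0\}$ is an invariant subspace of the symmetric matrix $L(G)$.

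Next I would use the identity
\[
L(G)+L(\overline{G})=nI-J,
\]
where $J$ is the all-ones matrix and $I$ the identity; this holds because $\delta_G(v)+\delta_{\overline{G}}(v)=n-1$ for every vertex $v$ and $A(G)+A(\overline{G})=J-I$. Restricting both sides to $\mathbf{1}^{\perp}$, where $J$ acts as the zero operator, yields $L(\overline{G})=nI-L(G)$ on $\mathbf{1}^{\perp}$. Writing the Laplacian eigenvalues of $G$ as $0=\mu_n\le\mu_{n-1}\le\dots\le\mu_1=\mu(G)$, the eigenvectors for $\mu_1,\dots,\mu_{n-1}$ may be chosen inside $\mathbf{1}^{\perp}$, and the displayed relation shows they are eigenvectors of $L(\overline{G})$ for the eigenvalues $n-\mu_1,\dots,n-\mu_{n-1}$. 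Together with the eigenvalue $0$ coming from $\mathbf{1}$, this identifies the spectrum of $L(\overline{G})$ as $\{0\}\cup\{n-\mu_i:1\le i\le n-1\}$. Since every Laplacian eigenvalue is nonnegative ($L(\overline{G})$ being positive semidefinite), each $n-\mu_i\ge 0$, and in particular $\mu(G)=\mu_1\le n$.

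For the equality case I would argue that $\mu(G)=n$ holds precisely when $n-\mu_1=0$, that is, when $0$ is an eigenvalue of $L(\overline{G})$ of multiplicity at least two (the $\mathbf{1}$ direction supplies one such eigenvalue, and $n-\mu_1$ supplies a second). Invoking the standard fact that the multiplicity of $0$ as a Laplacian eigenvalue equals the number of connected components of the graph, multiplicity at least two for $\overline{G}$ is equivalent to $\overline{G}$ being disconnected, which gives exactly the claimed characterization.

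The only delicate point is to make the eigenvalue correspondence rigorous rather than a mere counting heuristic: one must check that $\mathbf{1}^{\perp}$ is genuinely $L(G)$-invariant (which holds because $L(G)$ is symmetric and $\mathbf{1}$ lies in its kernel), so that an orthonormal eigenbasis of $L(G)$ can be chosen with one vector along $\mathbf{1}$ and the remaining $n-1$ inside $\mathbf{1}^{\perp}$. On that complement $L(\overline{G})=nI-L(G)$ is literally a polynomial in $L(G)$, so the two matrices are simultaneously diagonalized there and the correspondence $\mu_i\mapsto n-\mu_i$ is exact. After that the inequality is immediate and the equality case reduces to the component-count fact, so no lengthy computation is required.
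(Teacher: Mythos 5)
Your proof is correct; note that the paper does not prove Lemma~\ref{bb2} at all but quotes it from \cite{GZ,Me}, and your argument via $L(G)+L(\overline{G})=nI-J$ restricted to $\mathbf{1}^{\perp}$, giving the spectrum of $L(\overline{G})$ as $\{0\}\cup\{n-\mu_i:1\le i\le n-1\}$, is precisely the classical proof underlying those references. The only step worth spelling out is the backward direction of the equality case: if $\overline{G}$ is disconnected, then $0$ has Laplacian multiplicity at least two for $\overline{G}$, so $n-\mu_i=0$ for some $i\le n-1$, and then $n=\mu_i\le\mu(G)\le n$ forces $\mu(G)=n$.
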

If $G$ is a connected graph, then $Q(G)$ is irreducible and so by Perron-Frobenius Theorem, there exists a unique unit positive eigenvector of $Q(G)$ corresponding to $q(G)$, which we call the $Q(G)$-Perron vector. For $Q(G)$-Perron vector $\mathbf{x}$, $x_u$ denotes the entry of  $\mathbf{x}$ at $u\in V(G)$.

\begin{lemma}\label{perron}\cite{CRS,HZ}
%Let $G$ be a connected graph with $u,v,w\in V(G)$ such that $uw\notin E(G)$ and $vw\in E(G)$. If the $Q(G)$-Perron vector $\mathbf{x}$ satisfies that $x_u\ge x_v$, then $q(G-vw+uw)>q(G)$.
Let $G$ be a connected graph with $u,v\in V(G)$. Suppose that $\emptyset\ne N\subseteq N_G(v)\setminus (N_G(u)\cup \{u\})$. Let $G'$ be the graph
obtained from $G$ by deleting the edges $vw$ and adding the edges $uw$ for $w\in N$.
If the $Q(G)$-Perron vector $\mathbf{x}$ satisfies that $x_u\ge x_v$, then $q(G')>q(G)$.
\end{lemma}

Let $G\cup H$ be the disjoint union of graphs $G$ and $H$. The disjoint union of $k$ copies of a graph $G$ is denoted by $kG$.
The join of disjoint graphs $G$ and $H$, denoted by $G\vee H$, is the graph obtained from $G\cup H$ by adding all possible edges between vertices in $G$ and vertices in $H$.
Denote by $K_n$ the complete graph of order $n$. Then
\[
F_{n,c}=K_1\vee ((n-1-2c) K_1\cup c K_2).
\]

From Lemma \ref{addedges} and the fact that $q(K_{1, n-1})=n$, we have the following result.

\begin{lemma}\label{fn}
For  $1\le c\le \lfloor\frac{n-1}{2}\rfloor$, $n<q(F_{n,c})\le q(F_n)$ with right equality if and only if $F_{n,c}\cong F_n$.
\end{lemma}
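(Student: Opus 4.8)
The plan is to derive the entire claim from a single monotonicity chain driven by Lemma~\ref{addedges}. The starting point is the stated fact $q(F_{n,0})=q(K_{1,n-1})=n$, together with the observation that every $F_{n,c}$ contains the spanning star $K_{1,n-1}$ and is therefore connected. The structural description $F_{n,c}=K_1\vee((n-1-2c)K_1\cup cK_2)$ turns $F_{n,0},F_{n,1},\dots,F_{n,\lfloor(n-1)/2\rfloor}$ into a nested family in which consecutive members differ by exactly one edge.

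First I would verify that, for each $c$ with $1\le c\le\lfloor\frac{n-1}{2}\rfloor$, the graph $F_{n,c}$ is obtained from $F_{n,c-1}$ by adding a single edge joining two of its unmatched (pendant) leaves. Since $F_{n,c-1}$ has exactly $n-1-2(c-1)=n+1-2c$ unmatched leaves, and $c\le\lfloor\frac{n-1}{2}\rfloor$ forces $n+1-2c\ge 2$, there are at least two such leaves available, and they are nonadjacent in $F_{n,c-1}$. Because $F_{n,c}$ remains connected, Lemma~\ref{addedges} yields the strict inequality $q(F_{n,c})>q(F_{n,c-1})$ at every step.

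Chaining these strict inequalities gives
\[
n=q(F_{n,0})<q(F_{n,1})<\cdots<q(F_{n,c})<\cdots<q\bigl(F_{n,\lfloor(n-1)/2\rfloor}\bigr)=q(F_n).
\]
The left portion of the chain immediately delivers the lower bound $n<q(F_{n,c})$ for every $c\ge 1$, while the right portion gives $q(F_{n,c})\le q(F_n)$. Strictness of each increment makes the sequence strictly increasing in $c$, so equality $q(F_{n,c})=q(F_n)$ can hold only at the top index $c=\lfloor\frac{n-1}{2}\rfloor$, that is, precisely when $F_{n,c}\cong F_n$; this settles the equality characterization.

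Since the argument is a direct telescoping application of Lemma~\ref{addedges}, I do not anticipate any serious obstacle. The only point requiring care is the bookkeeping guaranteeing that each passage $F_{n,c-1}\to F_{n,c}$ is a \emph{single} edge addition between nonadjacent vertices with the result still connected---precisely the inequality $n+1-2c\ge 2$ ensuring a free pair of leaves always exists. Everything else is forced by the monotonicity lemma.
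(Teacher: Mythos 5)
Your proof is correct and takes essentially the same route as the paper, which derives the lemma directly from Lemma~\ref{addedges} together with the fact $q(K_{1,n-1})=n$; your telescoping chain $n=q(F_{n,0})<q(F_{n,1})<\cdots<q(F_n)$ simply makes explicit the edge-by-edge argument the paper leaves implicit. The bookkeeping you flag (that $c\le\lfloor\frac{n-1}{2}\rfloor$ guarantees $n+1-2c\ge 2$ free leaves at each step, and that connectivity is preserved) is exactly the right detail to check, and it holds.
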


Let $G$ be a graph with vertex partition $V_1\cup \dots\cup V_m$. For $1\le i<j\le m$, set $Q_{ij}$ to be the submatrix of $Q(G)$ with rows corresponding to vertices in $V_i$ and columns corresponding to vertices in $V_j$. The quotient matrix of $Q(G)$ with respect to the partition $V_1\cup \dots V_m$ is denoted by $B=(b_{ij})$, where $b_{ij}=\frac{1}{|V_i|}\sum_{u\in V_i}\sum_{v\in V_j}q_{uv}$. If $Q_{ij}$ has constant row sum, then we say $B$ is an equitable quotient matrix.
The following lemma is an immediate consequence of \cite[Lemma 2.3.1]{BH}

\begin{lemma}\label{quo}
For a connected graph $G$, if $B$ is an equitable quotient matrix of $Q(G)$, then $q(G)$ is equal to the largest eigenvalue of $B$.
\end{lemma}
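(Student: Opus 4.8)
The plan is to realize the quotient matrix $B$ through the characteristic matrix of the partition and then transfer the Perron--Frobenius structure back and forth between $Q(G)$ and $B$. First I would introduce the $n\times m$ characteristic matrix $S=(s_{ui})$ with $s_{ui}=1$ if $u\in V_i$ and $s_{ui}=0$ otherwise; since its columns are nonzero with pairwise disjoint supports, $S$ has full column rank $m$ and is in particular injective. The equitability hypothesis states that each block $Q_{ij}$ has constant row sum, and by the definition of $b_{ij}$ that common row sum is exactly $b_{ij}$. This is precisely the matrix identity $Q(G)S=SB$: for $u\in V_i$ the $(u,j)$ entry of $Q(G)S$ is $\sum_{v\in V_j}q_{uv}$, the $u$-th row sum of the block $Q_{ij}$, which equals $b_{ij}$, and this is also the $(u,j)$ entry of $SB$.

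From $Q(G)S=SB$ I would first record the easy inclusion of spectra: if $B\mathbf{y}=\lambda\mathbf{y}$ with $\mathbf{y}\neq\mathbf{0}$, then $Q(G)(S\mathbf{y})=S(B\mathbf{y})=\lambda(S\mathbf{y})$, and $S\mathbf{y}\neq\mathbf{0}$ because $S$ is injective, so $\lambda$ is an eigenvalue of $Q(G)$. Consequently every eigenvalue of $B$ is an eigenvalue of $Q(G)$, and in particular the largest eigenvalue of $B$ is at most $q(G)$.

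For the reverse inequality I would use the hypothesis that $G$ is connected. Every entry $b_{ij}$ is a nonnegative combination of entries of $Q(G)$, so $B$ is entrywise nonnegative; moreover $b_{ij}>0$ for $i\neq j$ exactly when some vertex of $V_i$ has a neighbor in $V_j$, so connectivity of $G$ makes $B$ an irreducible nonnegative matrix. By the Perron--Frobenius Theorem, $B$ has a positive eigenvector $\mathbf{y}>\mathbf{0}$ whose eigenvalue is the spectral radius $r(B)$; since all eigenvalues of $B$ are real (being eigenvalues of the symmetric matrix $Q(G)$), $r(B)$ is the largest eigenvalue of $B$. Then $S\mathbf{y}$ is a strictly positive vector and, by the identity above, a $Q(G)$-eigenvector for $r(B)$. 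But $G$ connected makes $Q(G)$ irreducible, so its positive eigenvector is unique up to scaling and is the $Q(G)$-Perron vector, belonging to $q(G)$. Hence $r(B)=q(G)$, which together with the previous paragraph gives that the largest eigenvalue of $B$ equals $q(G)$.

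The individual steps are routine, and the easy inclusion $\mathrm{spec}(B)\subseteq\mathrm{spec}(Q(G))$ is exactly the content of \cite[Lemma 2.3.1]{BH}. The one point that genuinely requires the connectedness assumption — and the only place I expect any friction — is the claim that $B$ is irreducible, equivalently that the $Q(G)$-Perron vector is constant on each part so that its eigenvalue $q(G)$ is actually realized by $B$ rather than merely dominated by it. I would justify this through the strong connectedness of the directed graph on $\{V_1,\dots,V_m\}$ recording the positive off-diagonal entries of $B$, which is precisely what promotes the inequality ``largest eigenvalue of $B\le q(G)$'' to the equality asserted in the lemma.
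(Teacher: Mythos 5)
Your proof is correct and takes essentially the same route the paper relies on: the paper dispenses with a proof by citing \cite[Lemma 2.3.1]{BH} for the spectral inclusion coming from $Q(G)S=SB$, and your Perron--Frobenius argument (nonnegativity and irreducibility of $B$, lifting its positive eigenvector through $S$, and uniqueness of the positive eigenvector of the irreducible matrix $Q(G)$) is precisely the standard reasoning that makes the lemma an ``immediate consequence'' of that citation. No gaps.
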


For positive integer $n\ge 5$ and nonnegative integers $a_1$, $a_2$, $b_1$ and $b_2$ with $n=3+a_1+2a_2+b_1+2b_2$,
let $H^{(n)}_{a_1,a_2,b_1,b_2}$ be the graph obtained from a triangle $uvwu$ by attaching $a_1$ edges and $a_2$ triangles at $u$ and attaching $b_1$ edges and $b_2$ triangles at $v$, see Fig. \ref{H}.
Let
\[
\mathcal{H}^{(n)}=\{H^{(n)}_{a_1,a_2,b_1,b_2}: a_1,a_2,b_1, b_2\ge 0, a_1+a_2\ge 1,b_1+b_2\ge  1,n=3+a_1+2a_2+b_1+2b_2\}.
\]

\begin{figure}[htbp]
\centering
\includegraphics[width =8cm]{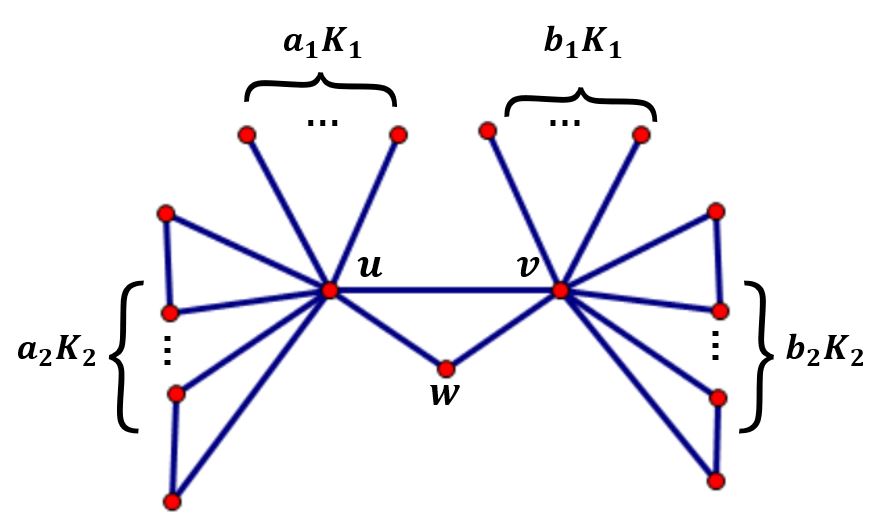}
\caption{The graph $H^{(n)}_{a_1,a_2,b_1,b_2}$.}
\label{H}
\end{figure}

\begin{lemma} \label{MM}  For even $n\ge 6$ and $a=\frac{n-4}{2}$, $q(H^{(n)}_{0,a,1,0})<n$.
\end{lemma}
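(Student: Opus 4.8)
The plan is to exploit the symmetry of $H:=H^{(n)}_{0,a,1,0}$ to reduce the computation of $q(H)$ to the largest eigenvalue of a fixed $5\times 5$ matrix, and then to show that this eigenvalue lies strictly below $n$. Writing $uvwu$ for the base triangle, the graph has degrees $\delta_H(u)=n-2$, $\delta_H(v)=3$, $\delta_H(w)=2$, while the $2a=n-4$ vertices lying in the triangles attached at $u$ have degree $2$ and the single pendant vertex $p$ at $v$ has degree $1$. These types form an equitable partition $V(H)=\{u\}\cup\{v\}\cup\{w\}\cup V_4\cup\{p\}$, where $V_4$ collects the $n-4$ triangle vertices at $u$. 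First I would record the associated equitable quotient matrix
\[
B=\begin{pmatrix}
n-2 & 1 & 1 & n-4 & 0 \\
1 & 3 & 1 & 0 & 1 \\
1 & 1 & 2 & 0 & 0 \\
1 & 0 & 0 & 3 & 0 \\
0 & 1 & 0 & 0 & 1
\end{pmatrix}
\]
and invoke Lemma \ref{quo} (using that $H$ is connected) to conclude that $q(H)$ equals the largest eigenvalue of $B$.

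Second, I would reduce the $5\times 5$ problem to a $3\times 3$ one. The coordinates of $xI-B$ indexed by $V_4$ and $p$ are coupled to the first block only through single entries, so a Schur-complement computation yields $\det(xI-B)=(x-3)(x-1)\,\phi(x)$, where $\phi(x)=\alpha\beta\gamma-\alpha-\beta-\gamma-2$ with $\gamma=x-2$, $\beta=x-3-\frac{1}{x-1}$, and $\alpha=x-(n-2)-\frac{n-4}{x-3}$; the factors $x-3,x-1$ and the rational shifts in $\alpha,\beta$ arise precisely from eliminating the degree-$2$ triangle vertices (whose eigen-equation reads $(x-3)x_t=x_u$) and the degree-$1$ pendant (whose eigen-equation reads $(x-1)x_p=x_v$). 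Since $q(H)$ is the largest root of the monic polynomial $\det(xI-B)$, it suffices to prove $\det(xI-B)>0$ for every $x\ge n$, so that no eigenvalue of $B$ reaches $n$; and because $(x-3)(x-1)>0$ on $[n,\infty)$ (as $n\ge 6$), this reduces to showing $\phi(x)>0$ for all $x\ge n$.

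Finally, I would establish the positivity of $\phi$ on $[n,\infty)$ by elementary estimates. Each of $\alpha,\beta,\gamma$ is increasing in $x$, so for $x\ge n$ one has $\gamma\ge n-2\ge 4$, $\beta\ge n-3-\frac{1}{n-1}\ge \frac{14}{5}$, and $\alpha\ge 2-\frac{n-4}{n-3}=1+\frac{1}{n-3}>1$. Using $\alpha>1$ and $\beta\gamma>1$ gives $\phi(x)=\alpha(\beta\gamma-1)-(\beta+\gamma+2)\ge(\beta\gamma-1)-(\beta+\gamma+2)=(\beta-1)(\gamma-1)-4\ge\frac{9}{5}\cdot 3-4>0$, as required. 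The routine part is the Schur-complement bookkeeping and the degree computations; the only step demanding care is this reduction, where one must track how eliminating the pendant and triangle vertices produces the rational shifts in $\alpha$ and $\beta$. This care is essential, since the crude bound of Lemma \ref{qmu} applied at $u$ only yields $q(H)\le n+\frac{1}{n-2}$, which is too weak to separate $q(H)$ from $n$; the exact quotient-matrix analysis is what makes the gap visible.
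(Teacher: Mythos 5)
Your proof is correct, and it shares the paper's setup: you use the same equitable partition $\{u\}\cup\{v\}\cup\{w\}\cup V_4\cup\{p\}$ and Lemma \ref{quo}, and your quotient matrix is exactly the paper's matrix up to a simultaneous permutation of rows and columns. The two arguments part ways at the point of bounding the largest eigenvalue of $B$. The paper expands $\det(\lambda I-B)$ into an explicit quintic $f(\lambda)$ and argues that $f(n)>0$ while $f'(\lambda)>0$ for $\lambda\ge n$, so the largest root lies below $n$; this is direct but computation-heavy, and in fact the paper's displayed value $f(n)=(n-2)(n^3-8n+16n-6)$ contains a typo (the intended factor is $n^3-8n^2+16n-6$). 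You instead eliminate the two pendant-type classes by a Schur complement, obtaining the factorization $\det(xI-B)=(x-3)(x-1)\phi(x)$ with $\phi=\alpha\beta\gamma-\alpha-\beta-\gamma-2$, and prove $\phi>0$ on $[n,\infty)$ via monotonicity of $\alpha$, $\beta$, $\gamma$ and the chain $\phi>(\beta-1)(\gamma-1)-4\ge\frac{9}{5}\cdot 3-4>0$. I checked the Schur-complement identity, the bounds $\gamma\ge n-2$, $\beta\ge\frac{14}{5}$, $\alpha\ge 1+\frac{1}{n-3}$, and the final inequality; also, your implicit use of the fact that every eigenvalue of $B$ is real (each is an eigenvalue of the symmetric matrix $Q(H)$) is legitimate, so $\det(xI-B)>0$ on $[n,\infty)$ indeed yields $q(H)<n$. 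What your route buys is robustness and transparency: the inequalities are elementary and you never have to expand (or differentiate) a quintic, which is precisely where the paper slipped typographically; what the paper's route buys is a single self-contained polynomial computation with no auxiliary rational functions. As a sanity check, at $n=6$ both methods give $\det(6I-B)=3\cdot 5\cdot\frac{72}{15}=72$, so the two computations agree.
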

\begin{proof}
Let $G=H^{(n)}_{0,a,1,0}$.  Let $V_1$ be the set of neighbors of $u$ with degree two different from $w$ in $G$ and
let $v'$ be the unique vertex of degree one in $G$.
With respect to  partition $V(G)=\{u\}\cup V_1\cup \{w\}\cup\{v\}\cup\{v'\}$, $Q(G)$ has an equitable quotient matrix $B$ with \[
B=\begin{pmatrix}
n-2&n-4&1&1&0\\
1&3&0&0&0\\
1&0&2&1&0\\
1&0&1&3&1\\
0&0&0&1&1
\end{pmatrix}.
\]
By Lemma \ref{quo}, $q(G)$ is the largest eigenvalue of $B$, which is equal to the largest root of $f(\lambda)=0$, where
\[
f(\lambda)=\lambda^5-(n+7)\lambda^4+(8n+11)\lambda^3-(21n-11)\lambda^2+(21n-32)\lambda+12-6n.
\]
It may be checked that
\begin{align*}
f'(\lambda)&=5\lambda^4-4(n+7)\lambda^3+3(8n+11)\lambda^2-2(21n-11)\lambda+21n-32\\
&\ge f'(n)=(n^2-4n-9)n^2+43n-32 >0
\end{align*}
if $\lambda\ge n$.
As $f(n)=(n-2)(n^3-8n^2+16n-6)>0$, the largest root of $f(\lambda)$ is less than $n$. So $q(G)<n$, as desired.
\end{proof}

\begin{lemma}\label{ab}
For any $G\in \mathcal{H}^{(n)}$ with $n\ge 5$, $q(G)<n$.
\end{lemma}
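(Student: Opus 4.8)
\section*{Proof proposal for Lemma \ref{ab}}

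\textbf{Setup.} First I would record the degrees. Writing $G=H^{(n)}_{a_1,a_2,b_1,b_2}$, the two attaching vertices satisfy $\delta_G(u)=2+a_1+2a_2$ and $\delta_G(v)=2+b_1+2b_2$, while $\delta_G(w)=2$, each pendant neighbour has degree $1$ and each triangle-neighbour has degree $2$. The key arithmetic identity is $\delta_G(u)+\delta_G(v)=4+(a_1+2a_2+b_1+2b_2)=n+1$. This already shows the crude bounds are too weak: the edge $uv$ forces $\max\{\delta_G(x)+\delta_G(y):xy\in E(G)\}\ge n+1$ in Lemma \ref{qedge}, and in Lemma \ref{qmu} the $u$-term equals $\delta_G(u)+\frac{1}{\delta_G(u)}(n+1+2a_2)$, which exceeds $n$ once many triangles sit at $u$. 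Hence the plan is to reduce to a short list of extremal graphs and bound those through equitable quotient matrices (Lemma \ref{quo}), exactly in the spirit of Lemma \ref{MM}.

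\textbf{Reductions that increase $q$.} I would use two monotone operations. (a) If $a_1\ge 2$, insert an edge joining two pendant neighbours of $u$; this turns two pendant edges into a triangle, so it keeps the graph inside $\mathcal{H}^{(n)}$ (sending $(a_1,a_2)\mapsto(a_1-2,a_2+1)$) and strictly raises $q$ by Lemma \ref{addedges}. Iterating, and doing the same at $v$, we may assume $a_1,b_1\in\{0,1\}$. (b) Since $H^{(n)}_{a_1,a_2,b_1,b_2}\cong H^{(n)}_{b_1,b_2,a_1,a_2}$, relabel so that the $Q(G)$-Perron vector $\mathbf{x}$ has $x_u\ge x_v$. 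Now relocate attachments from $v$ to $u$: replacing an edge $vz$ (with $uz\notin E(G)$) by $uz$ strictly increases $q$ by Lemma \ref{perron}, and a whole triangle is moved by two such steps. Each move keeps the graph connected and in $\mathcal{H}^{(n)}$ provided $v$ retains at least one attachment, so we stop when $v$ carries exactly one pendant edge or one triangle. As $q$ strictly increases and $\mathcal{H}^{(n)}$ is finite, the maximum of $q$ over $\mathcal{H}^{(n)}$ is attained at one of these concentrated graphs.

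\textbf{The extremal candidates.} The concentrated graphs are few. If $v$ keeps a pendant edge, then $u$ absorbs $n-4$ vertices and parity forces $H^{(n)}_{0,(n-4)/2,1,0}$ for even $n$ and $H^{(n)}_{1,(n-5)/2,1,0}$ for odd $n$; if $v$ keeps a triangle, then $u$ absorbs $n-5$ vertices, giving $H^{(n)}_{1,(n-6)/2,0,1}$ for even $n$ and $H^{(n)}_{0,(n-5)/2,0,1}$ for odd $n$. The even pendant case is precisely the graph of Lemma \ref{MM}, so there $q<n$. For each of the three remaining candidates I would take the partition of $V(G)$ into $\{u\}$, $\{v\}$, $\{w\}$, the (at most one) pendant vertex at $u$, the triangle-vertices at $u$, and the pendant/triangle-vertices at $v$; this is equitable, so by Lemma \ref{quo} $q(G)$ equals the largest eigenvalue of a quotient matrix $B$ of order at most $6$. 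As in Lemma \ref{MM}, I would then exhibit the characteristic polynomial $f$ of $B$ and verify $f(n)>0$ together with $f'(\lambda)>0$ for $\lambda\ge n$, which forces the largest root, and hence $q(G)$, to be strictly below $n$.

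\textbf{Main obstacle.} The delicate points are the concentration step and the final polynomial analysis. For the former one must check that relabelling legitimately secures $x_u\ge x_v$, that every relocation keeps $G$ connected and inside $\mathcal{H}^{(n)}$, and that the process terminates exactly at the concentrated graphs listed above. For the latter, the quotient polynomials $f$ have $n$-dependent coefficients, so establishing $f(n)>0$ and monotonicity past $\lambda=n$ for each candidate family is the real computation, mirroring the estimate in Lemma \ref{MM}. A minor extra issue is that the ``pendant at $v$'' candidate appears to dominate the ``triangle at $v$'' one; rather than prove this comparison, I would simply bound all candidates, so that the uniform conclusion $q(G)<n$ holds for every $G\in\mathcal{H}^{(n)}$.
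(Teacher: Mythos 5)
Your overall framework (maximize $q$ over $\mathcal{H}^{(n)}$, use Lemma \ref{addedges} to force $a_1,b_1\le 1$, then shift attachments toward the vertex with the larger Perron entry) matches the paper's opening moves, but there is a genuine gap in the concentration step. Lemma \ref{perron} moves a \emph{single} edge, and its hypothesis $x_u\ge x_v$ refers to the Perron vector of the \emph{current} graph. To move a whole triangle at $v$ you need two successive edge moves; after the first one (replacing $vz_1$ by $uz_1$) the graph has changed, its Perron vector has changed, and you no longer know that the $u$-entry dominates the $v$-entry, so the second application of Lemma \ref{perron} is unjustified. (Moreover, contrary to what you assert, the intermediate graph is not in $\mathcal{H}^{(n)}$; that alone would be harmless, since only the endpoints of the chain must lie in the family, but it shows the step is not the routine iteration you describe.) Consequently your reduction fails exactly when $v$ carries only triangles and at least two of them, i.e.\ $b_1=0$ and $b_2\ge 2$: these graphs are never eliminated and are not among your four candidates, so the lemma is not proved for them. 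The paper sidesteps this trap entirely: it never moves triangles; in its Case 2.1 it treats all graphs with $b_1=0$ (any $b_2\ge 1$) directly, by checking that the quantity in Lemma \ref{qmu} is below $n$ at every vertex. To rescue your route you would need a two-edge version of Lemma \ref{perron} (provable by the Rayleigh-quotient argument, with an extra equality analysis to obtain strictness), which is not among the cited tools.

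A second, lesser defect is that the decisive computations are promised rather than performed. For three of your four candidates ($H^{(n)}_{1,(n-5)/2,1,0}$, $H^{(n)}_{1,(n-6)/2,0,1}$, $H^{(n)}_{0,(n-5)/2,0,1}$) you would still have to write down $5\times 5$ or $6\times 6$ equitable quotient matrices, compute characteristic polynomials with $n$-dependent coefficients, and verify $f(n)>0$ together with $f'(\lambda)>0$ for $\lambda\ge n$; Lemma \ref{MM} shows that even one such verification is a nontrivial calculation. The paper needs only that single computation because its endgame is different: after the reductions, Lemma \ref{qmu} together with its equality condition (equality only for regular or semi-regular bipartite connected graphs) settles every remaining case, including the delicate configuration $H^{(n)}_{1,(n-5)/2,1,0}$, where the degree bound equals $n$ exactly and only the strictness clause saves the day. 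So as it stands your plan both leaves the subfamily $b_1=0$, $b_2\ge 2$ uncovered and defers the computations that would carry the rest; both issues must be repaired before this counts as a proof.
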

\begin{proof}
If $n=5$, then the result follows by a direct calculation and the fact that $\mathcal{H}^{(5)}=\{H^{(5)}_{1,0,1,0}\}$.

Suppose  that $n\ge 6$.
Let $G$ be a graph having the largest signless Laplacian spectral radius in $\mathcal{H}^{(n)}$, say $G=H^{(n)}_{a_1,a_2,b_1,b_2}$. By Lemma \ref{addedges}, we have $a_1,b_1=0,1$.
Let $\mathbf{x}$ be the $Q(G)$-Perron vector. Assume that $x_u\ge x_v$.
If $b_2=0$, then as $b_1+b_2\ge 1$ and $b_1\le 1$, we have $b_1=1$.
If $b_2\ge 1$, then we have by Lemma \ref{perron} that $b_1=0$ and $b_2=1$.

For $z\in V(G)$, let \[
\Delta_G(z)=d_G(z)+\frac{1}{d_G(z)}\sum_{z'\in N_G(z)}d_G(z').
\]
Then
\begin{align*}
\Delta_G(w)&=d_G(w)+\frac{1}{d_G(w)}\sum_{z\in N_G(w)}d_G(z)\\
&=2+\frac{1}{2}\left(a_1+2a_2+2+b_1+2b_2+2\right)\\
&=2+\frac{n+1}{2}\\
&<n.
\end{align*}
It is easy to verified that each vertex $z\in V(G)\setminus\{u,v,w\}$, $\Delta_G(z)<n$.

\noindent
{\bf Case 1.} $b_1=1$ and $b_2=0$.

If $a_1=0$, then $G=H^{(n)}_{0,a_2,1,0}$, so we have by Lemma \ref{MM} that $q(G)<n$.

Suppose that $a_1=1$. Then $a_2=\frac{n-5}{2}$, so
\begin{align*}
\Delta_G(u)&=d_G(u)+\frac{1}{d_G(u)}\sum_{z\in N_G(u)}d_G(z)\\
&=n-2+\frac{1}{n-2}\left(1+2(n-5)+2+3\right)\\
&=n
\end{align*}
and
\[
\Delta_G(v)=d_G(v)+\frac{1}{d_G(v)}\sum_{z\in N_G(v)}d_G(z)=3+\frac{1}{3}\left(n-2+2+1 \right)<n.
\]
As $G$ is neither regular nor semi-regular, we have by Lemma \ref{qmu} that $q(G)<\max\{\Delta_G(z):z\in V(G) \}=n$, as desired.

\noindent
{\bf Case 2.} $b_1=0$ and $b_2=1$.

If $n=6$, then $a_1=1$, $a_2=0$, and $G=H_{1,0,0,1}\cong H_{0,1,1,0}$, so we have by Lemma \ref{MM} that $q(G)<6=n$.

Suppose that $n\ge 7$. As $n=3+a_1+2a_2+2$, we have $a_1+2a_2=n-5$, so
\begin{align*}
\Delta_G(u)&=d_G(u)+\frac{1}{d_G(u)}\sum_{z\in N_G(u)}d_G(z)\\
&=a_1+2a_2+2+\frac{1}{a_1+2a_2+2}\left(a_1+4a_2+2+4\right)\\
&=n-3+\frac{n-1+2a_2}{n-3}\\
&< n
\end{align*}
and
\begin{align*}
\Delta_G(v)=&d_G(v)+\frac{1}{d_G(v)}\sum_{z\in N_G(v)}d_G(z)\\
&=4+\frac{1}{4}\left(a_1+2a_2+2+2+2\cdot 2 \right)\\
%&=4+\frac{a_1+2a_2+8}{4}\\
&= 6+\frac{n-5}{4}\\
&<n.
\end{align*}
Now, by Lemma \ref{qmu}, $q(G)\le \max\{\Delta_G(z):z\in V(G) \}<n$.
\end{proof}

\section{Proof of Theorems \ref{You} and \ref{You2}}

Now we are ready to prove Theorem \ref{You}.
\begin{proof}[Proof of Theorem \ref{You}]
Suppose by contradiction that $G$ contains no chorded cycles. By Lemma \ref{addedges}, we may assume that $G$ is connected.
Note that neither $K_{1,n-1}$ nor $K_{2,n-2}$ contains a chorded cycle, and $q(K_{1,n-1})=q(K_{2,n-2})=n$. Moreover,
for any $c=1,\dots, \left\lfloor \frac{n-1}{2}\right\rfloor$,
$F_{n,c}$  contains no chorded cycles, and  $q(F_{n,c})>n$ by Lemma \ref{fn}.
Thus, it suffices to show that $G\cong K_{1,n-1},K_{2,n-2}$ or $G\cong  F_{n,c}$ for some $c=1,\dots, \left\lfloor \frac{n-1}{2}\right\rfloor$.

Let $uv$ be an edge of $G$ such that $d_G(u)+d_G(v)=\max\{d_G(w)+d_G(z):wz\in E(G)\}$.
By Lemma \ref{qedge} and the assumption, we have
\[
d_G(u)+d_G(v)\ge q(G)\ge n.
\]
We consider  cases $d_G(u)+d_G(v)=n$ and $d_G(u)+d_G(v)\ge n+1$ separately.

\noindent
{\bf Case 1.} $d_G(u)+d_G(v)=n$.

In this case, $d_G(u)+d_G(v)=q(G)=n$.
By Lemma \ref{qedge} again, $G$ is regular or semi-regular.

Suppose first that $G$ is regular. Then $G$ is regular of degree $\frac{n}{2}$, so $n$ is even.  If $n\ge 6$, then
the aforementioned Czipser's result implies that $G$ contains a chorded cycle, a contradiction. So $n=4$ and $G\cong K_{2,2}$, as desired.

Suppose next that $G$ is semi-regular.
Let $(V_1,V_2)$ be the bipartition of $G$.
We claim that $G$ is complete bipartite. Otherwise, there exists an edge $uv$ in $\overline{G}$ with $u\in V_1$ and $v\in V_2$,  so both $G$ and
$\overline{G}$ are connected. By Lemmas \ref{bb1} and \ref{bb2},  $q(G)=\mu(G)<n$, a contradiction.
It then follows that $G\cong K_{a,n-a}$ for some $a=1,\dots,\lfloor \frac{n}{2}\rfloor$. If $a\ge 3$, then we have  by an easy check or by Czipser's result that $G$ contains a chorded cycle, a contradiction. So $G\cong K_{1,n-1},K_{2,n-2}$, as desired.

\noindent
{\bf Case 2.} $d_G(u)+d_G(v)\ge n+1$.

We claim that $|N_G(u)\cap N_G(v)|\le 1$ as otherwise $G$ contains a chorded cycle.
Then \[
|N_G(u)\cup N_G(v)|= d_G(u)+d_G(v)-|N_G(u)\cap N_G(v)|\ge n+1-1=n,
\]
so $V(G)=N_G(u)\cup N_G(v)$ and $|N_G(u)\cap N_G(v)|=1$, say $w\in N_G(u)\cap N_G(v)$.

Let $N_1=N_G(u)\setminus \{v,w\}$ and $N_2=N_G(v)\setminus\{u,w\}$. Suppose that $N_1\ne\emptyset$ and $N_2\ne\emptyset$.
If there is an edge $z_1z_2$ with $z_i\in N_i$ for $i=1,2$, then $z_1uwvz_2z_1$ is a cycle with a chord $uv$, which is a chorded cycle, a contradiction. So there is no edge between vertices in $N_1$ and vertices in $N_2$.
Then $G\in \mathcal{H}^{(n)}$ and so by Lemma \ref{ab}, $q(G)<n$, a contradiction.
So $N_1=\emptyset$ or $N_2=\emptyset$.

Assume that $N_2=\emptyset$. Then $d_G(u)=n-1$.
As $G$ has no chorded cycles, $G[N_G(u)]$ contains no copies of $K_{1,2}$, so $G[N_G(u)]\cong (n-1-2c) K_1\cup c K_2$ for some positive integer $c\le \lfloor\frac{n-1}{2}\rfloor$. So $G\cong  F_{n,c}$.
\end{proof}

From Theorem \ref{You}, we can prove Theorem \ref{You2}.

\begin{proof}[Proof of Theorem \ref{You2}]
If $q(G)\ge q(F_n)$, then
by the fact that $q(K_{1,n-1})=q(K_{2,n-2})=n$, Lemma \ref{fn} and Theorem \ref{You},
$G$ contains a chorded cycle unless $G\cong F_n$.

Applying
Lemma \ref{quo} we find that
$q(F_n)$ is equal to the largest eigenvalue of the matrix
\[
\begin{pmatrix}
n-1 & n-1\\
1 & 3
\end{pmatrix}
\]
if $n$ is odd
and
the largest eigenvalue of the matrix
\[
\begin{pmatrix}
n-1 & n-2 & 1\\
1 & 3 & 0\\
1 & 0 & 1
\end{pmatrix}
\]
if $n$ is even. In the latter case, $q(F_n)$ is equal to the largest root of $\lambda^3-(n+3)\lambda^2+3n\lambda-(2n-4)=0$, that is,
$(\lambda-2)(\lambda^2-(n+1)\lambda+n-2)=0$.  So
\[
q(F_n)=\begin{cases}
\frac{1}{2}(n+2+\sqrt{(n-2)^2+8}) & \mbox{if $n$ is odd}, \\[2mm]
\frac{1}{2}(n+1+\sqrt{(n-1)^2+8}) & \mbox{if $n$ is even}.
\end{cases}
\]
Now the result follows.
\end{proof}

By Theorem \ref{You} and Lemma \ref{addedges}, we have the following corollary.

\begin{corollary}
If $G$ is a graph of order $n\ge 4$ containing no chorded cycles and $G\ncong K_{2,n-2}$, $F_{n,c}$ for any $c=0,\dots,\lfloor\frac{n-1}{2}\rfloor$, then
\[
q(G)<n=q(K_{2,n-2})=q(F_{n,0})<\dots <q(F_{n,\lfloor (n-1)/2\rfloor}).
\]
\end{corollary}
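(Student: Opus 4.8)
The plan is to observe that the displayed chain breaks into three self-contained pieces, each already delivered by the results above: the left-hand strict inequality $q(G)<n$ is the contrapositive of Theorem~\ref{You}(i), the central equalities $n=q(K_{2,n-2})=q(F_{n,0})$ are explicit evaluations, and the strictly increasing tail is an edge-addition argument based on Lemma~\ref{addedges}. No new estimate is needed; all the genuine work has been front-loaded into Theorem~\ref{You} (and, through it, into Lemma~\ref{ab}).

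First I would dispatch the left inequality. Let $G$ have order $n\ge 4$, contain no chorded cycle, and satisfy $G\ncong K_{2,n-2}$ and $G\ncong F_{n,c}$ for all $c=0,\dots,\lfloor\frac{n-1}{2}\rfloor$. If $q(G)\ge n$ held, then Theorem~\ref{You}(i) would force $G$ either to contain a chorded cycle or to be isomorphic to one of $K_{2,n-2}$ or some $F_{n,c}$; each alternative contradicts a hypothesis. Hence $q(G)<n$.

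Next I would fix the two middle values. Since $F_{n,0}=K_{1,n-1}$ and $q(K_{1,n-1})=n$, we have $q(F_{n,0})=n$. For $q(K_{2,n-2})=n$ I would either cite the evaluation already recorded at the start of the proof of Theorem~\ref{You}(i), or argue directly: $K_{2,n-2}$ is bipartite and its complement $K_2\cup K_{n-2}$ is disconnected, so Lemmas~\ref{bb1} and \ref{bb2} give $q(K_{2,n-2})=\mu(K_{2,n-2})=n$. This yields $n=q(K_{2,n-2})=q(F_{n,0})$.

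Finally I would assemble the increasing tail. For each $c$ with $0\le c<\lfloor\frac{n-1}{2}\rfloor$, the graph $F_{n,c+1}$ is obtained from $F_{n,c}$ by adding a single edge between two degree-one leaves of the underlying star, and $F_{n,c+1}$ is connected; Lemma~\ref{addedges} then gives $q(F_{n,c})<q(F_{n,c+1})$. Iterating over $c$ produces $q(F_{n,0})<q(F_{n,1})<\dots<q(F_{n,\lfloor(n-1)/2\rfloor})$, consistent with the bounds $n<q(F_{n,c})\le q(F_n)$ from Lemma~\ref{fn}. The main thing to watch is purely bookkeeping---checking that the ``unless'' clause of Theorem~\ref{You}(i) lists exactly the graphs excluded in the corollary, and that each $F_{n,c+1}$ is connected so that Lemma~\ref{addedges} applies; there is no substantive obstacle, since the hard case analysis lives entirely inside Theorem~\ref{You}.
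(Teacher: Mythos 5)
Your proposal is correct and matches the paper's own (implicit) proof: the paper derives the corollary exactly by combining Theorem~\ref{You}(i) (giving $q(G)<n$ via contraposition), the evaluations $q(K_{2,n-2})=q(F_{n,0})=n$, and repeated application of Lemma~\ref{addedges} for the strictly increasing chain $q(F_{n,0})<\dots<q(F_{n,\lfloor (n-1)/2\rfloor})$. Your bookkeeping (connectedness of each $F_{n,c+1}$ and the exact match of the excluded graphs) is sound, so nothing further is needed.
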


A theta-graph consists of at least three  internally disjoint paths with
the same two distinct end vertices. A chorded cycle is a theta graph, but a theta graph is not necessarily a chorded cycle. By Theorems \ref{You} and \ref{You2}
and the fact that $K_{2,k}$ with $k\ge 3$ is a theta-graph, we have

\begin{theorem}
Suppose that $G$ is a graph of order $n\ge 4$.

(i) If $q(G)\ge n$, then $G$ contains a theta-graph unless  $G\cong F_{n,c}$ for some $c=0,\dots, \left\lfloor \frac{n-1}{2}\right\rfloor$ if $n\ge 5$, and $G\cong C_4$ or $G\cong F_{n,c}$ for some $c=0,1$ if $n=4$.

(ii) If
\[
q(G)\ge \begin{cases}
\frac{1}{2}(n+2+\sqrt{(n-2)^2+8}) & \mbox{if $n$ is odd}, \\[2mm]
\frac{1}{2}(n+1+\sqrt{(n-1)^2+8}) & \mbox{if $n$ is even},
\end{cases}
\]
then $G$ contains a theta-graph unless $G\cong F_n$.
\end{theorem}

\section{Concluding remarks}

For positive integers $n$ and $k$, let $g_k(n)$ denote the maximum size of a graph of order $n$ that does not contain a cycle with $k$ chords incident to a vertex on the cycle.
Alon \cite{Alo} used $g_k(n)$ to bound an anti-Ramsey function from above.
Erd\H{o}s conjectured that
$g_k(n)=(k+1)n-(k+1)^2$ if $n\ge 2k+2$. Lewin (see \cite[p.~398, no.~12]{Bol}) showed that Erd\H{o}s' conjecture is not true if $2k\le n\le \frac{5(k-1)}{2}$. Bollob\'{a}s \cite[p.~398, no.~12]{Bol} conjectured that there is a function $n(k)$ such that
$g_k(n)=(k+1)n-(k+1)^2$ for $n\ge n(k)$.  Jiang \cite{Ji}  confirmed Bollob\'{a}s' conjecture by showing
$n(k)\le 3k+3$. Thus, if $G$ is a graph of order $n$ with $n\ge 3k+3$  that does not contain a cycle with $k$ chords incident to a vertex on the cycle, then $|E(G)|\le (k+1)n-(k+1)^2$, and it is attained if $G\cong K_{k+1, n-k-1}$.
By Theorem \ref{You2}, the extremal graph for signless Laplacian spectral radius behaves quite different from that for the size among graphs without a chorded cycle. For $k\ge 2$,
similar cases occur for graphs without a cycle with $k$ chords incident to a vertex on the cycle, as
$q(F_n)>n=q(K_{k+1, n-k-1})$ and both $F_n$ and $K_{k+1, n-k-1}$ do not contain a cycle with $k$ chords incident to a vertex on the cycle. However, $K_{2,n-2}$ maximizes both the spectral radius and the size
among $n$-vertex graphs without a chorded cycle. On one hand, the spectral sufficient conditions that imply a graph contains a chorded cycle via spectral radius and signless Laplacian spectral radius may be extended to $\alpha$-spectral radius for $0\le \alpha<\frac{1}{2}$ and $\frac{1}{2}\le \alpha<1$, respectively \cite{GuoZh}.
On the other hand, we conjecture that for some $n_0(k)$ with $k\ge 2$, if $G$ is a graph of order $n\ge n_0(k)$ with $\rho(G)\ge \rho(K_{k+1, n-k-1})$, then $G$ contains a cycle with $k$ chords incident to a vertex on the cycle  unless $G\cong K_{k+1, n-k-1}$.

\bigskip

\noindent
{\bf Declaration of competing interest}

There is no competing interest.

\bigskip

\noindent
{\bf Data availability}

No data was used for the research described in the article.

\bigskip
\noindent {\bf Acknowledgement}

The authors thank the referees for their constructive comments and suggestions.
This work was supported by the National Natural Science Foundation of China (No.~12071158).

\end{document}